\newcommand{\colb}{\textcolor{black}}
\newcommand{\colr}{\textcolor{black}}
\newcommand{\QED}{\hspace*{\fill}\rule{2.5mm}{2.5mm}}
\newtheorem{theorem}{Theorem}[section]
\newenvironment{proof}{\noindent{\bf Proof\ }}{\QED\\}
\newcommand{\R}{\mathbb{R}}
\newcommand{\argmin}{argmin}
\begin{document}
\begin{center}
\vspace{0.5cm} {\Large \bf An objective look at obtaining the plotting positions for QQ-plots}\\
\vspace{0.4cm} Reza Hosseini$^a$, Akimichi Takemura$^b$,\\
$^a$IBM Research Collaboratory, Singapore\\
$^b$Department of Mathematical Informatics, University of Tokyo, Japan\\
$^a$rezah@sg.ibm.com
\end{center}

\begin{abstract}
Choosing the plotting positions for the QQ-plot has been a subject of much debate in the statistical and
engineering literature. This paper looks at this problem objectively by considering three frameworks:
distribution-theoretic; decision-theoretic; game-theoretic. In each framework, we derive the plotting positions
and show that there are more than one legitimate solution depending on the practitioner's objective. This work
clarifies the choice of the plotting positions by allowing one to easily find the mathematical equivalent of
their view and choose the corresponding solution. This work also discusses approximations to the plotting positions when
no closed form is available.\\
\vskip 1mm
\noindent {\bf Key Words}:  Plotting position; Loss function; Invariant; QQ-plot; Distribution-free
\end{abstract}

\section{Introduction}

A quantile-quantile plot (QQ-plot) is a graphical method for comparing observed data with a proposed (estimated)
distribution. Often the order statistics of the data are compared with the quantiles of a distribution which is
fitted to the data. For example a normal distribution can be \colr{fitted} to observed independent data: $x_1,\cdots,x_n,$
using maximum likelihood method: $\hat{X} \sim N(\hat{\mu},\hat{\sigma}^2)$ and then to check the goodness of the fit,
one can plot the order statistics, $x_{(1)},\cdots, x_{(n)}$ (data arranged in non-decreasing order), versus the quantiles of 
the estimated distribution
$\hat{X}$: $(q_{\hat{X}}(p_1),\cdots,q_{\hat{X}}(p_n))$ for an appropriate choice of probabilities $0< p_1
\leq p_2 \leq \cdots \leq p_n<1$, which we call a {\it probability index vector} (PIV). The choice of the PIV is the so
called {\it plotting positions problem}. In fact we can view this problem more generally by plotting
$x_{(1)},\cdots, x_{(n)},$ versus $f_1(\hat{X}),\cdots,f_n(\hat{X})$, where $f_1,\cdots,f_n$ are functions of the
distribution \colr{of} $\hat{X}$ -- without mentioning or using quantiles. However one can always apply the cumulative
distribution function (CDF) of $\hat{X}$, $F_{\hat{X}}$ to $(f_1(\hat{X}),\cdots,f_n(\hat{X}))$ to get back a
PIV, $(p_1,\cdots,p_n)$, and therefore the problem can be viewed as finding the appropriate PIV at least
in the continuous case, \colr{which is the case we consider here}.

The plotting positions problem has received significant attention in the literature.
The approaches suggested can be divided \colr{into} two general cases: (1) distribution-free methods; (2)
distribution-dependent methods. In (1) PIV does not depend on the underlying distribution of the data and in (2)
one utilizes some assumptions regarding the distribution of the data in finding the plotting positions. One may
argue that in (2) if the shape of the distribution is known, a QQ-plot \colr{is not useful}. This is not the
case because: we may have only partial information about the shape of the distribution, for example we may know
that the underlying distribution is a Gamma distribution and like to compare the data to a fit from the
Exponential distribution (a special case of Gamma distribution); even in the case that the assumed distribution and the fitted
distribution are the same, the QQ-plot can tell us how well the estimated distribution is performing across the
range of the quantiles. In this paper, we mainly focus on solving (1) but will clarify the difference in our
discussion.

\cite{weibull-1939} originally suggested $p_i={i}/{(n+1)}$ (Weibull method) and many other authors such as
\cite{yu-2001} and \cite{harter-1984} addressed this problem. \cite{makkonen-2008} provided evidence for the
Weibull method  by showing that the probability of non-exceedance of a new sample, denoted by $X_f$, from $X_{(i)}$ is exactly equal
to ${i}/{(n+1)}$: $P(X_f\leq X_{(i)})=i/(n+1)$. However some other authors (e.g.\;\cite{lozano-2013}) provide evidence for another intuitively appealing,
distribution-free solution to this problem which consists of the median of the Beta distribution of the CDF
applied to the order statistics: $Med\{F_X(X_{(i)})\}=Med\{Beta(i,n+1-i)\}$ (discussed in \cite{beard-1943},
\cite{benard-1953}, \cite{folland-2002}, \cite{hosseini-2009-thesis}). We call this method, the \colb{{\it Beta Median} (BM) method}.

The contributions of this work are as follows.  (1) It systematically defines intuitively appealing and \colr{rigorous} mathematical objectives to compare
the plotting positions schemes and \colr{in each case finds} the optimal plotting positions from three points of view: distribution-theoretic; decision-theoretic; game-theoretic. Most of the other relevant works consider a \colr{distribution-theoretic} approach and obtain different solutions from various assumptions 
(e.g.\;\cite{makkonen-2008} and \cite{lozano-2013}). These solutions are compared here. The decision-theoretic and game-theoretic views are new to the best of our knowledge.  In the decision-theoretic
framework, we find the optimal positions by minimizing appropriate loss functions defined for quantiles. In the
game-theoretic framework, we define intuitively appealing games in which two players choose plotting positions and
after the true distribution (or a future sample) is revealed, they exchange money based on the outcome. 
(2) This work provides a careful investigation of the accuracy of several approximations to the median of \colr{the Beta} distribution, specifically developed
for finding the optimal plotting positions (mostly of the form $(i-a)/(n+b)$ for some constants $a,b$, e.g.\;discussed in \cite{cunnane-1978}) along the PIV ($p_1,\cdots,p_n$) for small and large $n$.  This work also provides a comprehensive  comparison between the two popular solutions of the plotting positions problem (based on the expectation and median of the corresponding Beta distribution) for arbitrary sample sizes in terms of their difference and 
fraction.

Section \ref{sect:frameworks} considers three mathematical frameworks to address this problem: a
distribution-theoretic framework; a decision-theoretic framework; a game-theoretic framework. The
decision-theoretic and game-theoretic frameworks can guide a practitioner \colr{to choose the appropriate
 PIV}. In each of these frameworks, we find the optimal PIV or state that it is not possible to find
a distribution-free solution. In the distribution-theoretic approach, we consider the distribution of either the
order statistics or their CDF and analyze what quantities \colr{of} the estimated distribution match these random
values \colr{better}. The decision-theoretic framework uses appropriate loss functions for quantiles which are invariant under
monotonic transformations of data (and distributions) to pick the optimal PIV. In the game-theoretic approach, we
envision some gambling games in which players pick a PIV in order to compete for monetary values
and the solution to the plotting positions problem is deduced from the optimal strategy of the player. Section
\ref{sect:approx} discusses some approximations to the Beta Median method and shows that for an approximation of
the popular form $(i-a)/(n+1-2a)$, to perform well at $p_1, p_n$, when $n$ becomes large, it is necessary to have
$a=\log(e/2)$ (where $\log$ denotes natural logarithm). However for such choice of $a$, this approximation is not very accurate in some percentiles, for example around 
the $10th$ percentile of the
data. We show that the algorithm developed by \cite{cran-1977} performs well across the data range. This algorithm is
implemented in R for calculating the quantiles of a given Beta distribution but is not the default for
calculating plotting positions. Section \ref{sect:comparison} compares the PIV of Weibull method to the PIV of Beta Median method. We make a comparison between the plotting positions both in terms of their difference and ratio. The plotting positions of the two methods are always close in terms of their difference. They are also  
 close in terms of their ratio toward the center of PIV, but they differ at the two ends of the PIV.

\section{Choosing the plotting positions} \label{sect:qq-plot}
\label{sect:frameworks}

First we introduce notation for the rest of the paper. Let $X_1,\cdots,X_n$ be a univariate independent
identically distributed  (i.i.d) sample from a continuous random variable $X$ with CDF $F_X$.
We denote the quantile function (the inverse of $F_X$) by $q_X$ and the non-decreasingly ordered sample by
$X_{(1)},\cdots,X_{(n)}$. Then we define $U_i=F_X(X_i),\;i=1,2,\cdots,n$ to be the (random) probability of
non-exceedance from each sample point. Note that the $U_i$ sequence is i.i.d with uniform distribution on [0,1] and
denote the ordered sample by $U_{(i)}$. Since $F_X$ is non-decreasing, we have, $U_{(i)}=F_X(X_{(i)})$. It can be shown
that $U_{(i)}$ follows a Beta distribution (e.g.\;\cite{folland-2002}) with density function:
\begin{equation}
f_{\alpha,\beta}(x)=\frac{\Gamma(\alpha+\beta)}{\Gamma(\alpha)\Gamma(\beta)}x^{\alpha-1}(1-x)^{\beta-1},
\label{eqn-beta-density}
\end{equation}
for $\alpha=i$ and $\beta=n+1-i$. We denote this distribution by $Beta(i,n+1-i)$. In the following we find the
optimal plotting positions using three frameworks: distribution-theoretic approach; decision-theoretic approach;
game-theoretic approach.


\subsection{Distribution-theoretic approach}
Here we use distribution theory to find the plotting positions. Suppose we intend to \colr{create} a QQ-plot of the order
statistics with respect to a proposed distribution $\hat{X}$ with CDF $F_{\hat{X}}$. In order to find the
location of plotting positions for the order statistics $X_{(i)}$, we can consider the solution to be any of
the following:
\begin{itemize}
\item The plotting position for $X_{(i)}$ should be the value of $F_X$ at the average of $X_{(i)}$
in an appropriate sense, for example the expectation or the median:
\[p_i^{E}:={F_X}(E\{X_{(i)}\}),\;\;\mbox{or}\;\;p_i^M:={F_X}(q_{X_{(i)}}(1/2)).\]
Then $p_i^{E}$ is not distribution-free. In order to show that, consider the special case of a random sample with only one element ($n=i=1$) and let $X$ be distributed as 
exponential distribution with parameter $\lambda$ and CDF $F_X(x) = 1 - \exp(-\lambda x),\;x \geq 0$ . Then
\[p_1^{E}:= {F_X}(E\{X_{(1)}\}) = {F_X}(E\{X\}) = F_X(\lambda^{-1}) = 1-\exp(-\lambda^2),\]  
which depends on $\lambda$.
However $p_i^M$ is distribution-free because for 
a strictly increasing function $\phi:\R \rightarrow \R:$
\[F_{\phi(X)}(q_{\phi(X)_{(i)}}(1/2)) = F_{\phi(X)}(q_{\phi(X_{(i)})}(1/2) =\] \[F_{\phi(X)}(\phi(q_{X_{(i)}}(1/2))) =  F_X(q_{X_{(i)}}(1/2)). \]
Therefore in order to calculate $p_i^M$, we can assume $X$ is a uniform distribution on [0,1], which we denote by $U$. We conclude 
\[p_i^M = {F_U}(q_{U_{(i)}}(1/2)) = q_{U_{(i)}}(1/2),\]
and therefore $p_i^M$ is equal to the median of $Beta(i,n+1-i)$.  In fact \cite{lozano-2013} suggest using $p_i^M$ as an intuitively appealing formula for plotting positions and develop polynomial equations to find them. These polynomials were also developed in \cite{hosseini-2009-thesis} (pages 234 and 235) where the choice for the plotting positions is motivated by a decision-theoretic approach.

\item The plotting position for
$X_{(i)}$ should be the value in $F_{\hat{X}}$, corresponding to the average $U_{(i)}$ in an appropriate sense,
either the expectation or the median:
\[p_i^E:=E\{U_{(i)}\},\;\;\mbox{or}\;\;p_i^M:=q_{U_{(i)}}(1/2).\]
Since $U_{(i)} \sim Beta(i,n+1-i)$, $p_i^E=i/(n+1)$ and the median does not have closed form. Later we will discuss the
approximations to this median.
\item Consider a new unobserved sample $X_f$. Then the plotting position of $X_{(i)}$ should be the probability of
 non-exceedance of this new sample, $X_f$, from $X_{(i)}$: $P(X_f\leq X_{(i)}) = i/(n+1),$ which is the same as Weibull method. 
 This is essentially the content of
\cite{makkonen-2006} and \cite{makkonen-2008}.
\end{itemize}
\subsection{Decision-theoretic approach}
In this section, we take a decision-theoretic approach by considering appropriate loss functions for assigning
the plotting positions. For example, we can measure the loss by the absolute value loss $|X_{(i)}-q_X(p)|$.
 Since it is a random quantity, we  minimize
the expected loss:
\[p_i=\underset{p \in [0,1]}{argmin}\; E \{|X_{(i)}-q_X(p)|\}.\]
Again the solution to this is not distribution-free since a loss function defined as such is not
distribution-free (or equivalently it is not invariant under strictly monotonic distributions).

 \cite{hosseini-2009-thesis} and \cite{hosseini-2010-PLF} introduced loss functions for quantiles
which are invariant under strictly monotonic functions. The {\it Probability Loss (PL) function}  corresponding to
a random variable $X$ with distribution function $F_X$ is defined to be
\begin{equation}
\delta_X(a,b):=P(a<X<b)+P(b<X<a),
\label{eqn-PL-def}
\end{equation}
which is simply equal to $|F_X(b)-F_X(a)|$ if the distribution is continuous. Also note that $\delta_X$ only depends 
on the random variable $X$ through its distribution $F_X$. Therefore we can apply this definition to a distribution function $F_X$ and denote it by $\delta_{F_X}$. We can also apply this loss to
functions of data, $D_1,D_2$ (e.g.\;$D_1=X_{(3)},D_2=X_{(4)}$): $\delta_X(D_1,D_2)=|F_X(D_2)-F_X(D_1)|.$ \cite{hosseini-2010-PLF}
showed many desirable properties of this loss function, in particular its invariance under strictly monotonic
transformations of the real numbers: Let $\phi: \R \rightarrow \R,$ be a strictly monotonic function, then PL
satisfies the following invariance:
\begin{equation}
\delta_X(a,b)=\delta_{\phi(X)}(\phi(a),\phi(b)).
\label{eqn-PLF-inv}
\end{equation}
Moreover, this loss is symmetric and it can be shown that it satisfies the triangle
inequality for the continuous variables (\cite{hosseini-2010-PLF}). Since this loss is random in
general, we need to consider a measure of average loss such as expectation.

 To obtain a distribution-free solution, we utilize
  the {\it Expected PL} (EPL):
 \begin{equation}
 p_i=\underset{p \in [0,1]}\argmin E\{\delta_{X} (X_{(i)},q_X(p))\}.
 \label{eqn-pi-EPL}
 \end{equation}
We also assume that the underlying distribution is continuous. Then the solution and its properties are given in
the following theorem.
\begin{theorem}
Suppose a random sample $X_1,\cdots,X_n$ from a continuous distribution function $F$ is given.
Then
\[E\{\delta_{X} (X_{(i)},q_X(p_i))\},\]
is minimized uniquely by $p_i=Med\{Beta(i,n+1-i)\}$. Moreover $p_i$ is the solution of the
equation $\sum_{j=i}^n  \binom{n}{j} u^j(1-u)^{n-j}=1/2,$ when solving for $u$, and we have $p_i=1-p_{n-i+1}$. 
\label{theo-QQ-plot}
\end{theorem}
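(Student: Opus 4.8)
The plan is to show that the Expected PL collapses to the mean absolute deviation of a single Beta random variable, after which the principle that the median minimizes mean absolute deviation does the rest. First I would rewrite the loss in closed form. Since $F$ is continuous, $\delta_X(a,b)=|F_X(b)-F_X(a)|$, and the generalized inverse satisfies $F_X(q_X(p))=p$ for every $p\in(0,1)$; hence
\[
\delta_X(X_{(i)},q_X(p))=|F_X(X_{(i)})-p|=|U_{(i)}-p|,
\]
where $U_{(i)}=F_X(X_{(i)})$. This is exactly the distribution-free reduction already used for $p_i^M$ above, and it shows the objective depends on $F$ only through the law $U_{(i)}\sim Beta(i,n+1-i)$. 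Taking expectations, I must minimize
\[
g(p):=E\{|U_{(i)}-p|\}
\]
over $p\in[0,1]$.

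Next I would minimize $g$ directly. Writing $g(p)=\int_0^p(p-u)f_{i,n+1-i}(u)\,du+\int_p^1(u-p)f_{i,n+1-i}(u)\,du$ and differentiating (the boundary contributions cancel) gives $g'(p)=2F_{U_{(i)}}(p)-1$ and $g''(p)=2f_{i,n+1-i}(p)$. Because the Beta density is strictly positive on $(0,1)$, the function $g$ is strictly convex there, so it has a unique minimizer, located where $g'(p)=0$, i.e.\ where $F_{U_{(i)}}(p)=1/2$. This is precisely the (unique) median of $Beta(i,n+1-i)$, which establishes $p_i=Med\{Beta(i,n+1-i)\}$ together with its uniqueness in a single stroke.

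For the binomial characterization I would invoke the standard identity between the regularized incomplete beta function and the binomial tail, namely $F_{U_{(i)}}(u)=\sum_{j=i}^n\binom{n}{j}u^j(1-u)^{n-j}$; one proves it either by repeated integration by parts of the Beta density, or probabilistically by observing that both sides equal the probability that at least $i$ of $n$ i.i.d.\ uniforms fall below $u$. Setting this tail equal to $1/2$ reproduces the stated equation for $p_i$. For the reflection symmetry, I would use that $1-U_{(i)}\sim Beta(n+1-i,i)$, which is exactly the law of $U_{(n-i+1)}$; applying $Med$ to both sides yields $1-p_i=p_{n-i+1}$.

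The conceptual core, and the step I would emphasize, is the reduction of EPL to $E\{|U_{(i)}-p|\}$; once this is in place the remaining arguments are routine. The only points requiring genuine care are the generalized-inverse identity $F_X(q_X(p))=p$ (which needs continuity of $F$ and is where distribution-freeness enters) and the appeal to strict positivity of the Beta density, which simultaneously delivers strict convexity, hence uniqueness, and underlies the binomial-tail form of the CDF. I therefore expect the bookkeeping around $q_X$ and the differentiation of $g$ to be the most delicate parts, with no deep obstacle beyond them.
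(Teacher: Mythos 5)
Your proposal is correct and takes essentially the same route as the paper: both reduce the expected probability loss to $E\{|U_{(i)}-p|\}$ with $U_{(i)}\sim Beta(i,n+1-i)$, identify the minimizer as the Beta median, derive the equation $\sum_{j=i}^n \binom{n}{j}u^j(1-u)^{n-j}=1/2$ from the CDF of the $i$th order statistic of uniforms (your binomial-tail identity is the same fact), and obtain $p_i=1-p_{n-i+1}$ from Beta symmetry. Your explicit convexity computation ($g'(p)=2F_{U_{(i)}}(p)-1$, $g''(p)=2f_{i,n+1-i}(p)>0$) is a somewhat more careful justification of uniqueness than the paper's appeal to monotonicity of the binomial polynomial, but it is a refinement rather than a different approach.
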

\begin{proof}
For the first part, note that we want to minimize:
\[E\{\delta_{F_X}(X_{(i)},q(p))\}=E\{|F(X_{(i)})-p|\}=E\{|U_{(i)}-p|\}.\]
However $E\{|U_{(i)}-p|\}$ is minimized by choosing $p$ to be the median of $\{U_{(i)}\}$ and we know that $U_{(i)} \sim Beta(i,n+1-i)$.

To prove $p_i$ is  the solution of the equation $\sum_{j=i}^n  \binom{n}{j} u^j(1-u)^{n-j}=1/2,$ when solving for $u$, note that under the continuity assumption, we have
\[E\{\delta_{F_X} (X_{(i)},q_X(p_i))\}=E \{|F_X(X_{(i)})-p_i|\}.\]
Since $F_X$ is a continuous random variable, $F_X(X_{(i)})$ is also continuous. Hence the minimum is obtained by
solving $P(F_X(X_{(i)})\leq x)=1/2$. This is equivalent to $P(X_{(i)}\leq q_F(x))=1/2$.  It is well-known that (e.g.\;\cite{arnold-1992}) 
the distribution of the
order statistics, $X_{(i)}$ is given by
\[P(X_{(i)}\leq y)=\sum_{j=i}^n  \binom{n}{j} F_X(y)^j(1-F_X(y))^{n-j}.\]
Hence, the minimum is obtained by solving
\[\sum_{j=i}^n  \binom{n}{j} F_X(q_X(u))^j(1-F_X(q_X(u)))^{n-j}=\sum_{j=i}^n  \binom{n}{j} u^j(1-u)^{n-j}=1/2,\]
which does not have a closed form solution in general. Also note that the solution does not depend on $F_X$.
However, the solution always exists and is unique since $\sum_{j=i}^n \binom{n}{j} u^j(1-u)^{n-j}$ is monotonic
for each $i$, continuous on [0,1] and ranges between 0 and 1.

Finally the fact that the resulting PIV is symmetric follows  from  the  symmetry  of  the  Beta CDF as seen in Equation \ref{eqn-beta-density}.
\end{proof}

These equations can be solved for $n=1,2$. For $n=1$, we have $p_1=1/2$. For  $n=2$, we have
$p_1=1-{1}/{\sqrt{2}}$ and $p_2={1}/{\sqrt{2}}$. Note that for arbitrary $n$, the last equation is $x^n=1/2$.
Hence we have $p_n=1/\sqrt[n]{2}$ and $p_1=1-1/\sqrt[n]{2}$.

\cite{hosseini-2010-PLF} introduced a related non-random loss which does not require taking the expectations and
is more appropriate when the future sample is of interest. The {\it future-value probability loss} (FPL) for
functions of data $D_1,D_2$ (e.g.\;$D_1(X_1,\cdots,X_n)=X_{(1)},D_2(X_1,\cdots,X_n)=X_{(n)}$), is defined by
\begin{equation}
\gamma_X(D_1,D_2):= P (D_1<X_f<D_2) + P(D_2<X_f<D_1),
\label{eqn-FPL-def}
\end{equation}
where $X_f \sim X$ a new independent draw (future draw) of the random variable of interest. Again we can show the
desirable properties of this loss including an its invariance under strictly monotonic transformations:  If two statistics (functions of data), $D_1,D_2$, 
are {\it equivariant} under a strictly monotonic transformation $\phi$, i.e.
\[\phi(D_i(X_1,\cdots,X_n))=D_i(\phi(X_1),\cdots,\phi(X_n)),\;\;i=1,2,\]
then
\begin{equation}
\gamma_X(D_1,D_2)=\gamma_{\phi(X)} (\phi(D_1),\phi(D_2)).
\label{eqn-FPL-inv}
\end{equation}
\colr{Note that $\gamma_X$ is not random. Therefore, we let:}
 \[p_i=\underset{p \in [0,1]}\argmin\; \gamma_{X}(X_{(i)},q_X(p)).\]
\begin{theorem}
The plotting position for $X_{(i)}$ by minimizing $\gamma_X(X_{(i)},q_X(p))$ is given by
$p_i=Med\{Beta(i,n+1-i)\}.$ \label{theo-gamma}
\end{theorem}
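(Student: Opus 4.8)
The plan is to show that the non-random FPL objective $\gamma_X(X_{(i)},q_X(p))$ coincides \emph{exactly} with the Expected PL objective of Equation \ref{eqn-pi-EPL}, so that the minimizer is inherited directly from Theorem \ref{theo-QQ-plot} with no new optimization required. The whole argument then reduces to a single conditioning step followed by recognizing the PL function $\delta_X$ inside an expectation.

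First I would condition on the sample, hence on the order statistic $X_{(i)}$, and use that the future draw $X_f \sim X$ is independent of $X_1,\cdots,X_n$. Writing out the definition in Equation \ref{eqn-FPL-def} and applying the tower property gives
\[
\gamma_X(X_{(i)},q_X(p)) = E_{X_{(i)}}\!\left[\, P\big(X_{(i)} < X_f < q_X(p)\mid X_{(i)}\big) + P\big(q_X(p) < X_f < X_{(i)}\mid X_{(i)}\big)\,\right].
\]
The key observation is that, conditional on $X_{(i)}=x$, the inner quantity is precisely the value of the PL function of Equation \ref{eqn-PL-def}, namely $\delta_X(x,q_X(p)) = P(x<X<q_X(p)) + P(q_X(p)<X<x)$, because $X_f$ has the same distribution as $X$. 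Hence the inner conditional probability is $\delta_X(X_{(i)},q_X(p))$, and taking the outer expectation yields
\[
\gamma_X(X_{(i)},q_X(p)) = E\{\delta_X(X_{(i)},q_X(p))\},
\]
which is exactly the EPL objective appearing in Equation \ref{eqn-pi-EPL}.

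With this identity in hand the second step is immediate: by Theorem \ref{theo-QQ-plot} the right-hand side is minimized uniquely at $p_i = Med\{Beta(i,n+1-i)\}$, which is the claim. Equivalently, under continuity I could substitute $\delta_X(x,q_X(p)) = |F_X(x)-p|$ together with $F_X(X_{(i)})=U_{(i)}$ to rewrite the objective as $E|U_{(i)}-p|$ and minimize it directly, but invoking Theorem \ref{theo-QQ-plot} avoids repeating that computation.

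The only point that needs genuine care — and the step I expect to be the main obstacle — is justifying the conditioning cleanly: one must check the independence of $X_f$ from the sample and invoke the continuity assumption so that $\delta_X(a,b)=|F_X(b)-F_X(a)|$ and the endpoint events carry zero probability. Once these are in place, the identification of $\gamma_X$ with the EPL is forced and the minimization is free. An alternative route, fully parallel to the proof of Theorem \ref{theo-QQ-plot}, would instead apply the FPL invariance of Equation \ref{eqn-FPL-inv} with $\phi=F_X$ to reduce to the uniform case before computing; but the conditioning argument above is more direct and reuses Theorem \ref{theo-QQ-plot} verbatim.
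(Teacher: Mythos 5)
Your proposal is correct and takes essentially the same route as the paper: the paper also conditions on $X_{(i)}$ (written as integrals against $dF_i(x)$, the CDF of the order statistic), uses the independence of $X_f$ to identify the integrand with $|F(x)-p|$ --- which is exactly $\delta_X(x,q_X(p))$ under continuity --- and thereby reduces $\gamma_X(X_{(i)},q_X(p))$ to $E\{|U_{(i)}-p|\}$, minimized at $Med\{Beta(i,n+1-i)\}$ as in Theorem \ref{theo-QQ-plot}. The only cosmetic difference is that you cite Theorem \ref{theo-QQ-plot} explicitly for the final minimization, while the paper repeats the one-line median argument.
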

\begin{proof}
We denote the CDF of $X_{(i)}$ by $F_i$ and proceed as follows.
\begin{align*}
\gamma_X(X_{(i)},q_X(p))&=P(X_{(i)}<X_f<q_X(p))+P(q_X(p)<X_f<X_{(i)})\\
&=\int_{-\infty}^{+\infty} P(x<X_f<q_X(p))d{F_{i}(x)}+\int_{-\infty}^{+\infty} P(q_X(p)<X_f<x)d{F_{i}(x)}\\
&=\int_{-\infty}^{q_X(p)} (p-F(x))d{F_{i}(x)}+\int_{q_X(p)}^{+\infty}(F(x)-p)d{F_{i}(x)}\\
&=\int_{-\infty}^{+\infty}|F(x)-p|d{F_{i}(x)} =E\{|F_i(X)-p|\},
\end{align*} 
which is again minimized at 
$p=Med\{Beta(i,n+1-i)\}$.
\end{proof}

\subsection{Game-theoretic approach}
Here we provide a game-theoretic approach for the plotting positions problem. Since we are mainly interested in
distribution-free results, we formulate the games in terms of $U_{(i)}$. However they can also be formulated in terms
of $X_{(i)}$ to get distribution-dependent results. This framework clarifies which
plotting positions scheme is the most appropriate for the given objective. We consider four games in which two
players A and B, use their method of picking the plotting positions for $U_{(i)}$, denoted by $p_i^A,p_i^B$
respectively and bet on the result. When the true distribution is revealed, A and B exchange money according to
one of the four following rules.
\begin{itemize}
\item {\bf Game (1):} When the true distribution is revealed, A wins if:
\[|U_{(i)}-p_i^A| < |U_{(i)}-p_i^B|,\]
in which case B pays one dollar to A and otherwise receives a dollar from A. The solution to this game is given
in the following theorem.\\
\begin{theorem}
For Game (1), $Med\{Beta(i,n+1-i)\}$ is the best strategy.
\end{theorem}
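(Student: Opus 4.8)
The plan is to treat Game (1) as a symmetric two-player zero-sum game and to identify $Med\{Beta(i,n+1-i)\}$ as its unique maximin strategy. First I would write down the expected net payoff to A, who receives $+1$ dollar when closer to $U_{(i)}$ and pays $1$ dollar otherwise, so the expected payoff is $2P(\text{A wins})-1$. Because $U_{(i)}$ is continuous, for $p_i^A\neq p_i^B$ the tie event $|U_{(i)}-p_i^A|=|U_{(i)}-p_i^B|$ has probability zero, and the elementary fact that on the line A is closer to $U_{(i)}$ exactly when $U_{(i)}$ lies on A's side of the midpoint $m=(p_i^A+p_i^B)/2$ reduces everything to a threshold event. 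Writing $F$ for the CDF of $Beta(i,n+1-i)$, the expected payoff to A becomes $\mathrm{sign}(p_i^B-p_i^A)\,(2F(m)-1)$, so the entire game hinges on comparing $F(m)$ with $1/2$.

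The central step is to show that $p_i^A=m^\ast:=Med\{Beta(i,n+1-i)\}$, for which $F(m^\ast)=1/2$, guarantees A a winning probability of at least $1/2$ against every reply. Since the Beta density is strictly positive on $(0,1)$, $F$ is strictly increasing there and $m^\ast$ is the unique solution of $F(m)=1/2$. If B plays $p_i^B>m^\ast$ the midpoint satisfies $m>m^\ast$, hence $F(m)>1/2$ and A's payoff $2F(m)-1$ is strictly positive; by symmetry $p_i^B<m^\ast$ forces $m<m^\ast$, $F(m)<1/2$, and payoff $1-2F(m)>0$. Thus the median strictly beats every $p_i^B\neq m^\ast$ and draws against $p_i^B=m^\ast$, so it never loses in expectation.

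To see that the median is genuinely the best strategy and not merely a safe one, I would prove the matching converse: any $p_i^A\neq m^\ast$ can be exploited. If $p_i^A<m^\ast$ then $F(p_i^A)<1/2$, and B replying with $p_i^B$ just above $p_i^A$ drives the midpoint down to $p_i^A$ and A's payoff to $2F(p_i^A)-1<0$; the case $p_i^A>m^\ast$ is symmetric. Hence every non-median choice has strictly negative worst-case payoff, whereas the median secures worst-case payoff $0$. Since the game is symmetric and zero-sum its value is $0$, so $m^\ast$ is the unique maximin (equivalently, equilibrium) strategy, which is exactly the claim.

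I expect the only real delicacy to be bookkeeping around ties rather than any hard computation. One must argue that the coincidence $|U_{(i)}-p_i^A|=|U_{(i)}-p_i^B|$ is negligible because $U_{(i)}$ is continuous, and fix a convention for the degenerate case $p_i^A=p_i^B$, which I would treat as a no-payoff draw in keeping with the symmetry of the game. The sole analytic input is the strict monotonicity of the Beta CDF on $(0,1)$, which upgrades the weak inequalities to strict wins off the median and pins down $m^\ast$ uniquely; notably no closed form for $m^\ast$ is required, exactly as in Theorem \ref{theo-QQ-plot}.
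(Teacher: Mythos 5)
Your proof is correct and follows essentially the same route as the paper: both reduce the event $|U_{(i)}-p_i^A|<|U_{(i)}-p_i^B|$ to a threshold event at the midpoint $(p_i^A+p_i^B)/2$ (the paper via squaring both sides, you via the side-of-midpoint observation) and then invoke the defining property of the median together with strict monotonicity of the Beta CDF. Your zero-sum/maximin framing, the explicit exploitation of non-median strategies, and the tie convention for $p_i^A=p_i^B$ merely make precise what the paper compresses into its terse closing ``if and only if'' remark, so no genuinely new idea is involved.
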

\begin{proof}
We claim that if A picks $p_i^A=Med\{Beta(i,n+1-i)\}$, he will insure for any $p_i^B \neq p_i^A$:
\[P\{|U_{(i)}-p_i^A| < |U_{(i)}-p_i^B|\}>1/2.\]
To show that, note that $p_i^A,p_i^B \geq 0$ and we have:
\begin{eqnarray*}
&& P\{|U_{(i)}-p_i^A| < |U_{(i)}-p_i^B|\}>1/2\\
&\Leftrightarrow& P\{(U_{(i)}-p_i^A)^2 < (U_{(i)}-p_i^B)^2\}>1/2\\
&\Leftrightarrow& P\{U_{(i)}(p_i^B-p_i^A)<(p_i^B-p_i^A)\frac{p_i^A+p_i^B}{2}\}>1/2\\
&\Leftrightarrow& \begin{cases} P \{U_{(i)}>\frac{p_i^A+p_i^B}{2}\}>1/2, & p_i^B<p_i^A\\P \{U_{(i)}<\frac{p_i^A+p_i^B}{2}\}>1/2, & p_i^B>p_i^A.\\
\end{cases}
\end{eqnarray*}
The last equation holds if and only if $p_i^A$ is the median of $U_{(i)}$: \[Med\{Beta(i,n+1-i)\}.\] 
\end{proof}
\item {\bf Game (2):} When the true distribution is revealed, A pays \[|U_{(i)}-p_i^A|-|U_{(i)}-p_i^B|,\] dollars to
B. Note that a negative value means A receives the magnitude of the value.
To find the best strategy for this game A needs to minimize $E\{|U_{(i)}-p_i^A|\},$ which is minimized again for
$q_{U_{(i)}}(1/2)$.
\item {\bf Game (3):} When the true distribution is revealed, A pays \[(U_{(i)}-p_i^A)^2-(U_{(i)}-p_i^B)^2,\] dollars to
B. Note that a negative value means: A receives the magnitude of the value.
To find the best strategy for this game, A needs to minimize $E\{(U_{(i)}-p_i^A)^2\},$ which is minimized for
$p_i=E\{U_{(i)}\}=i/(n+1)$.
\item  {\bf Game (4):} A and B play a game based on the result of a new sample, $X_f$, drawn from the true distribution.
A judge will keep sampling from the true distribution until $X_f$: falls between $q_X(p_i^A)$ and $X_{(i)}$, but not between 
$q_X(p_B^i)$ and $X_{(i)}$,  in
which case A pays one dollar to B; or falls between $q_X(p_B)$ and $X_{(i)}$, but not between $q_X(p_A^i)$ and $X_{(i)}$,  in which case B pays one dollar to A.
The player A can find the best strategy by minimizing for $p$ in $\gamma_{X}(X_{(i)},q_X(p))$ and the solution is again 
$Med\{Beta(i,n+1-i)\},$ as shown in Theorem \ref{theo-gamma}.
\end{itemize}

\section{Approximation of the plotting positions}
\label{sect:approx} We have seen that the desirable solution for the plotting position, $p_i$ in many frameworks
has turned out to be the median of the Beta distribution: $Beta(i,n+1-i),$ which is equal to the solution of
the equation
\begin{equation}
H(i,n):=\sum_{j=i}^n  \binom{n}{j} x^j(1-x)^{n-j}=1/2,
\label{eqn-H}
\end{equation}
\colb{and also appears in  \cite{hosseini-2009-thesis} and \cite{lozano-2013}. By symmetry of PIV (Theorem \ref{theo-QQ-plot}), we need to solve either $H(i,n),\;i\leq n/2$ or $H(i,n),\;i\geq n/2$. 
Since the polynomials $H(i,n),\;i\geq n/2$ are all increasing in $x$, a bisection
method can be used to find the solution with any desired accuracy. However, this method 
becomes slow for large $n$ because of the binomial coefficients
calculation. Fortunately, a fast algorithm for calculating the quantiles of the Beta
distribution is developed by \cite{cran-1977} which is implemented in C and R languages}.  Below we compare the
values obtained from this algorithm with the exact values as well as some other popular approximations suggested
in the literature. \colb{Explicitly note that the solution obtained through the bisection algorithm will be called Exact method since it can be found with any desired accuracy (i.e., the length of the interval obtained at the last iteration).} Despite the existence of good algorithms to approximate the median of Beta distribution, many authors and
packages use simpler formulas for approximating the median for the QQ-plot purpose.

There are various measures of the form $\colb{p_i={(i-a)}/{(n+b)}}$ suggested in the literature to
approximate the median of the corresponding Beta distribution which is the same as the solution to $H(i,n)=1/2$.
Due to the existence of more exact solutions for the median such as the one developed by \cite{cran-1977} (Cran's
Method), there is little practical need for solutions of the form $\colb{p_i={(i-a)}/{(n+b)}}$. \colb{However, because of the popularity of
these solutions and their implementation in commercial software, it is interesting to
compare the accuracy of different choices for $a$ and $b$. In particular we require $a$ and $b$ to
satisfy the following constraints:}
 
\begin{itemize}
\item[(1)] $p_{i}=\colb{{(i-a)}/{(n+b)}}$ should be equal to 1/2, when $n$ is odd and $i=(n+1)/2$.\\
Let $n=2k+1$ then by assuming above we have $\colb{{(k+1-a)}/{(2k+1+b)}}=1/2 \Leftrightarrow \colb{b=1-2a}.$
This condition is also stated in \cite{Erto-2013}.
\item[(2)] Symmetry: $p_i=1-p_{n-i+1}$.\\
From above we conclude:
\[\frac{i-a}{n+b}=1-\frac{n-i+1-a}{n+b} \Leftrightarrow b=1-2a,\]
which is the same as the requirement for holding (1). This is equivalent to Postulate 4 in \cite{Erto-2009}.
\item[(3)] $p_1=1-p_n$ should behave the same as $1-(1/2)^{1/n}$ in the limit:
\[\lim_{n \rightarrow \infty} \frac{1-(1/2)^{1/n}}{\frac{1-a}{n+b}}=1.\]
Calculating the limit using the L'H\^opital's rule, we have:
\begin{equation}
\lim_{n \rightarrow \infty} \frac{1-(1/2)^{1/n}}{\frac{1-a}{n+b}}= \lim_{n \rightarrow \infty} \frac{-(-1/n^2(1/2)^{1/n}\log(1/2))}{-(1-a)/(n+b)^2}=
\frac{-\log(1/2)}{1-a},
\label{eqn-limit-p1}
\end{equation}  where $\log$ denotes the natural logarithm. Letting the limit equal to 1, we get $a=1-\log 2.$
\end{itemize}

Finding plotting positions which satisfy conditions such as the above are referred to as axiomatic approach in
\cite{Erto-2013}.  In the above, (1) and (2) \colb{imply that $b =1-2a$, which is the form widely suggested in the
literature}.  Also (3) suggests $a=1-\log 2.$ Combining this 
with (1), we get $p_i=(i-a)/(n+1-2a),\;\colb{a=\log(e/2) \approx 0.3068528}.$   \cite{Erto-2013} suggests using
$a=n+\frac{n-1}{2^{1/n}-2}$ which depends on $n$ and
 has the correct limit of $\log(e/2)$. This is not surprising because the derivation of \cite{Erto-2013} is
 equivalent to letting $p_n={(n-a)}/{(n+1-2a)}=(1/2)^{1/n}$.

In Table \ref{table:method-comparison}, we have calculated the log PIV for $n=2,\cdots,5$ using the Exact method;
the Cran method; Erto method; $a=\log(e/2)$ method; \colb{\cite{kerman-2011} method ($a=1/3$)}. By definition, all these methods are
symmetric and we only need to compare the lower or upper half of the PIV vector. We have chosen the lower half
because the difference among the methods are more significant on the log scale in the lower half. For an odd $n$,
the position of the middle value is the same theoretically \colr{for all methods} and equal to $\log(1/2)$ and therefore
we have omitted that value. By definition, the Erto method matches the tail values at $p_1,p_n$ and
therefore it is exact for $n=1,2,3$ as shown in the table. It starts to
deviate from the exact values for larger $n$ and for positions which are not very close to $i=1,n$ or $i=n/2$. 
We observe that the Cran method for all $n=2,\cdots,5$ is exact. Also $a=\log(e/2)$ method, which is asymptotically the same as Erto method,  performs close to
the exact values. \colb{Kerman method} performs poorly, especially for $p_1$ (and therefore $p_n$). This
poor performance of \colb{Kerman method} cannot be remedied even for large $n$, since in order to capture
the correct limit one has to let $a=\log(e/2)$.

Figure \ref{positions_methods_1_10.pdf} compares the approximated log positions to the exact log positions (grey
line) for larger sample sizes,
 $n=10,20,\cdots,150$ and for the positions 1 (Left Top Panel); $(1/10)n$ (Top Right Panel);
 $(2/10)n$ (Bottom Left Panel); $(4/10)n$  (Bottom Right Panel). The Cran method is given in dashed line; Erto method
 is given
 in dotted line; the $a=\log(e/2)$ method is given in filled circles; the $a=1/3$ method is given in triangles.
 Note that the sample sizes, $n,$ are chosen to be multiples of $10$ in order to get an integer position for all cases. In all
 panels we observe that the Cran method is almost exact. The Erto method (dotted) and  $a=\log(e/2)$ method
 (filled circles)
 are very close to
 exact value for $p_1$ and start to deviate from the exact value (larger than) for $(1/10)n$ and $(2/10)n$
 positions. On the contrary the $a=1/3$ method (triangles) is poor for $p_1$ and start to be more accurate for
 $(1/10)n$ and $(2/10)n$. Finally all methods start to be close to the exact as we move toward $n/2$ as seen in
 $(4/10)n$.

\begin{figure}
\centering
\includegraphics[width=0.35\textwidth,angle=270] {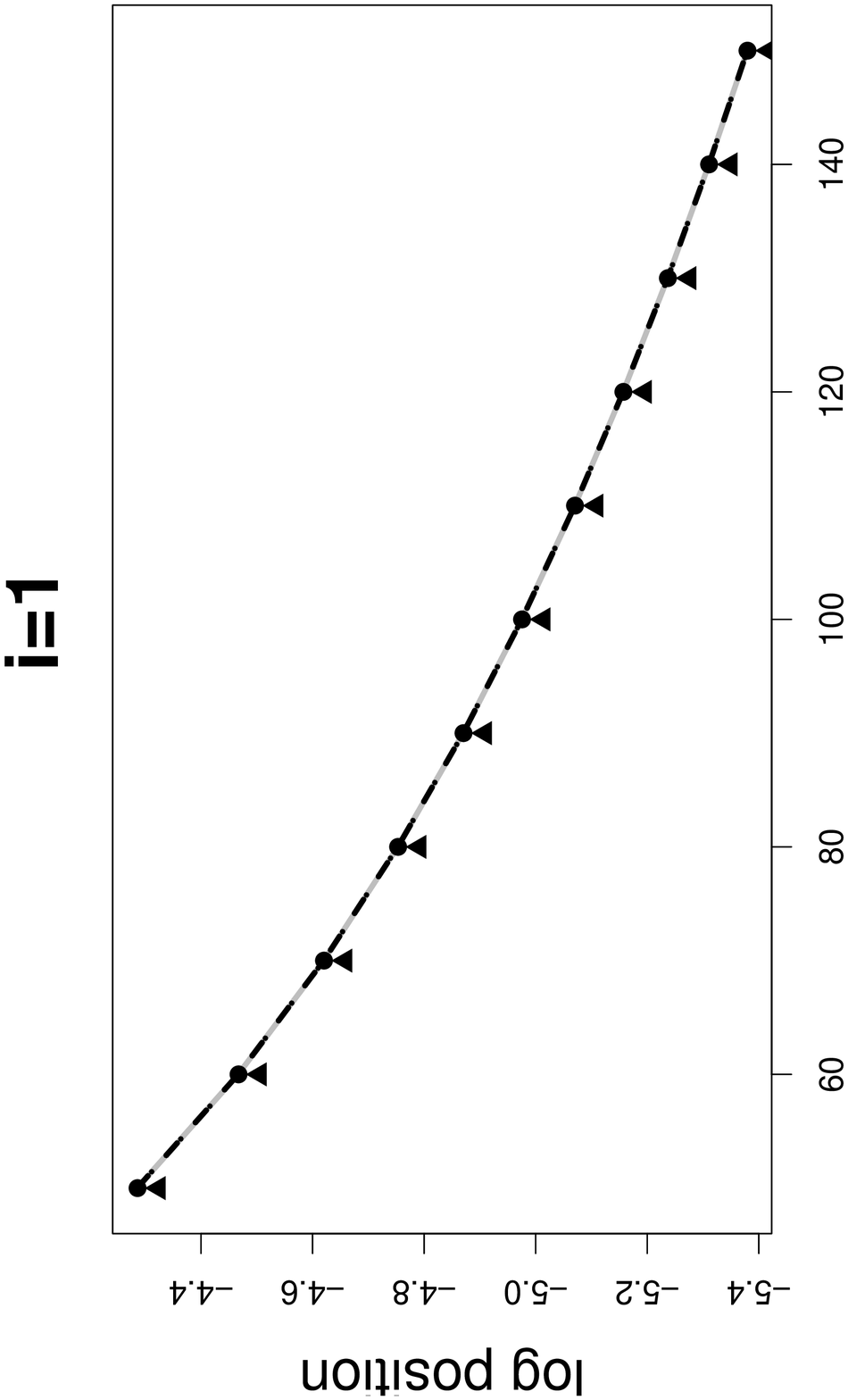}\includegraphics[width=0.35\textwidth,angle=270] {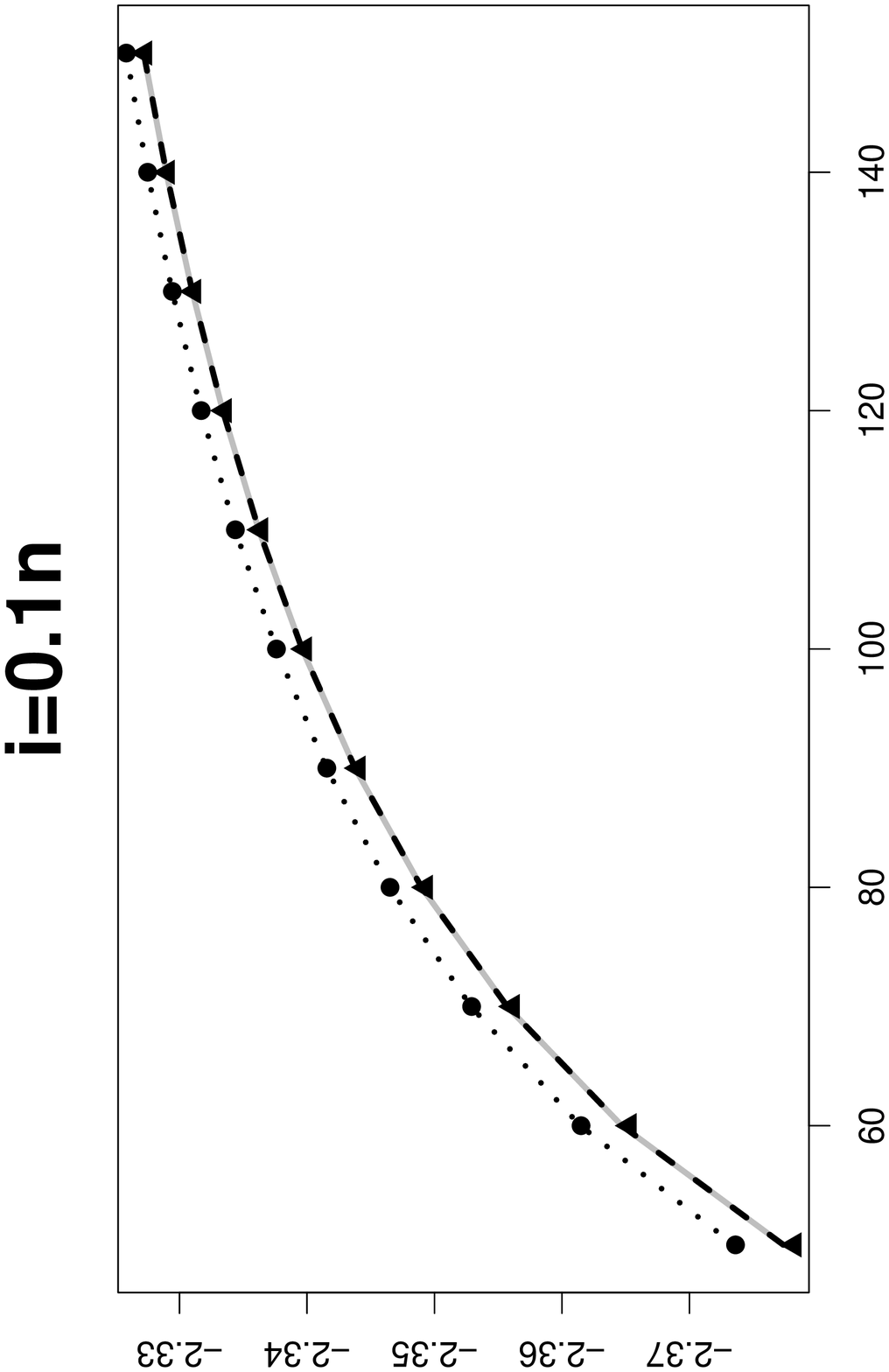}

\includegraphics[width=0.35\textwidth,angle=270] {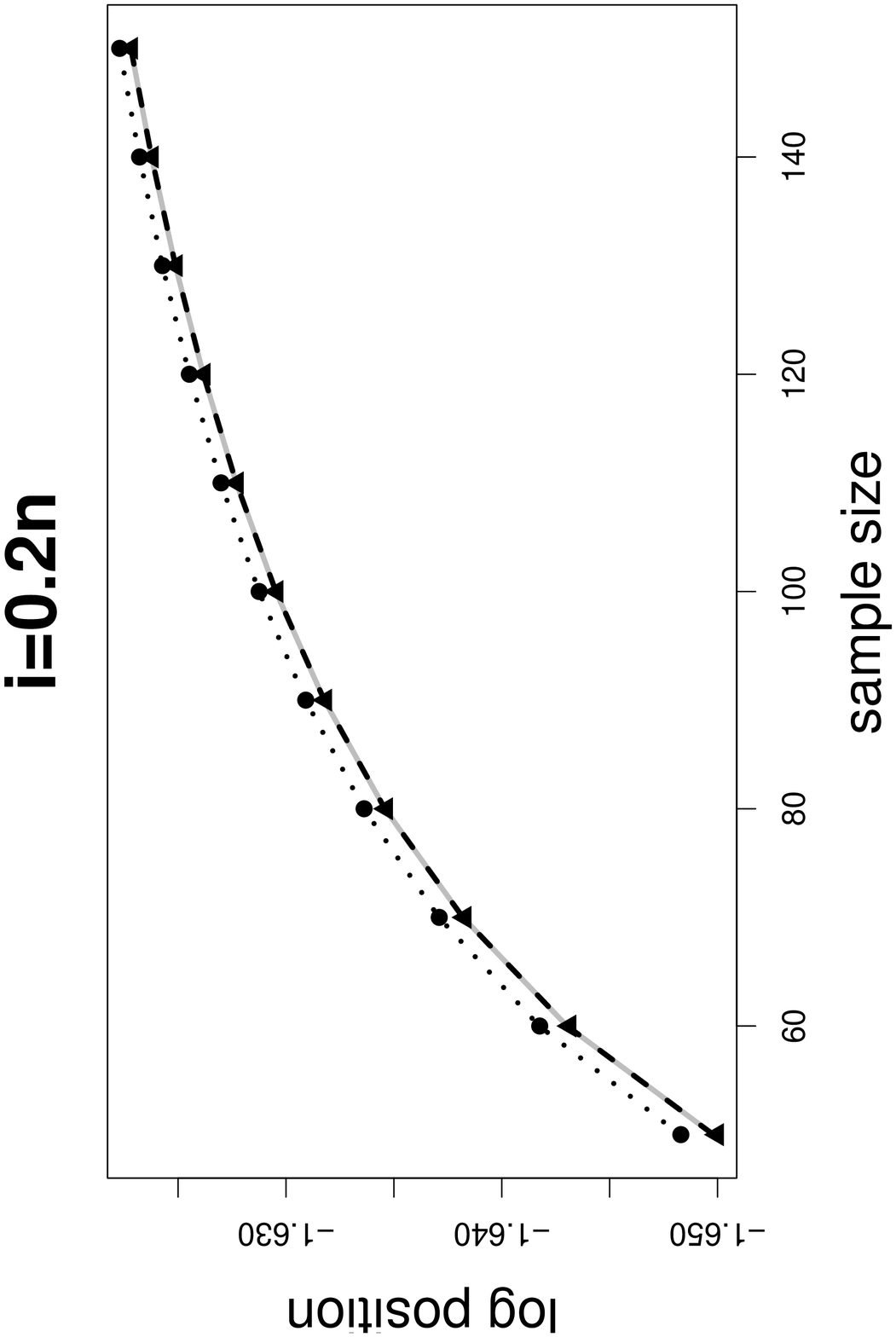}\includegraphics[width=0.35\textwidth,angle=270] {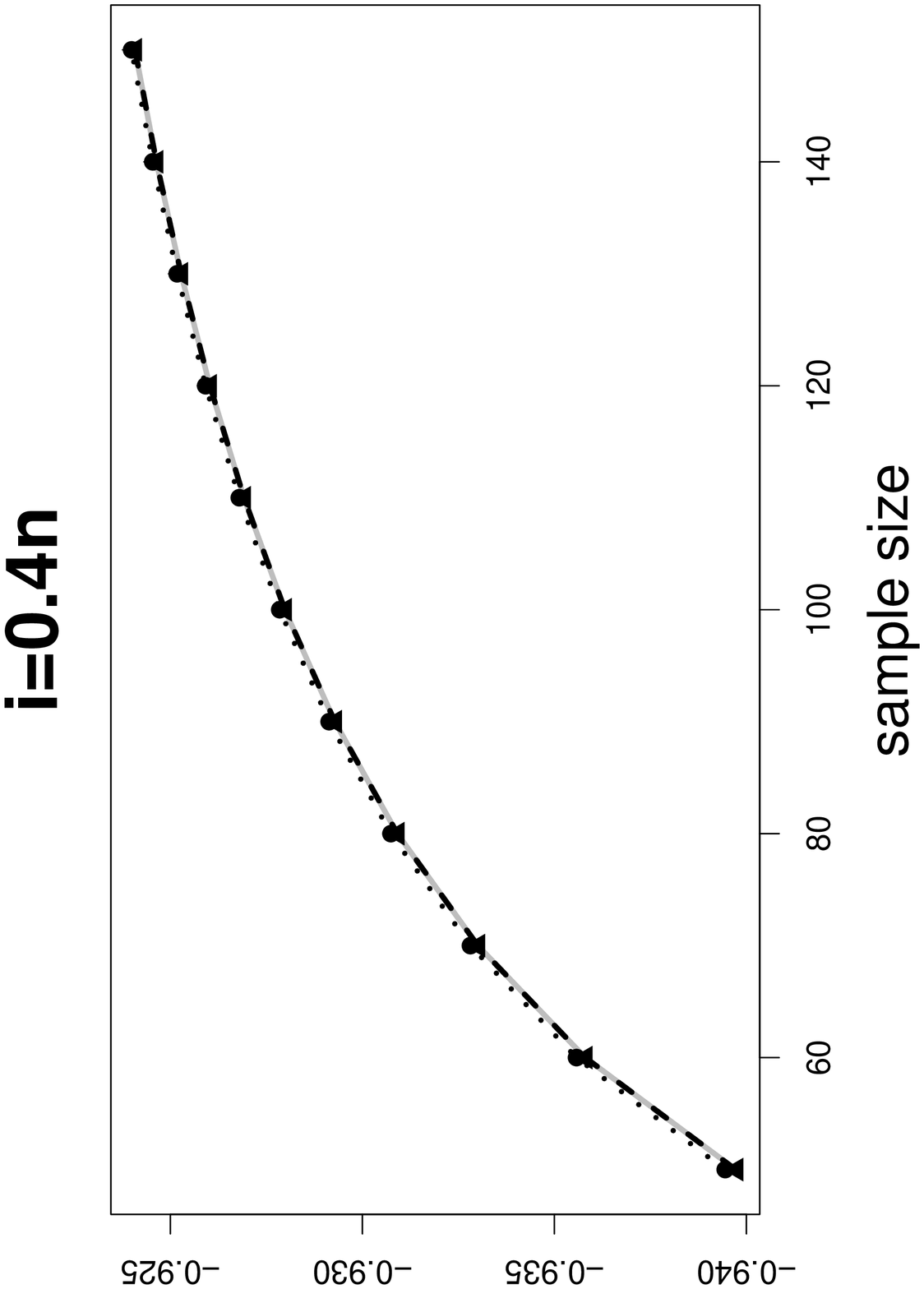}
\caption{\footnotesize Figure compares the approximated log positions to the exact log positions (grey line) for
 $n=50,60,\cdots,150$ and for the positions 1 (Left Top Panel); $(1/10)n$ (Top Right Panel);
 $(2/10)n$ (Bottom Left Panel); $(4/10)n$  (Bottom Right Panel). The Cran method is given in dashed line;
 Erto method is given
 in dotted line; the $a=\log(e/2)$ method is given in filled circles; \colb{Kerman method} is given in triangles.}
 \label{positions_methods_1_10.pdf}
\end{figure}

\begin{table}
\centering
\caption{Comparing the approximating methods for the natural logarithm of quantiles for $n=2,\cdots,5.$ The values different from
corresponding exact values are shown in bold.}
 \footnotesize
\begin{tabular}{|l|c|c|c|c|c|c|c}
\hline
\mbox{Method} & $n=2,\;(p_1)$ & $n=3,\; (p_1)$ & $n=4, (p_1,p_2)$ & $n=5, (p_1,p_2)$ \\
\hline
 Exact  & -1.228 &-1.578 &-1.838\; -0.9526 &-2.044\; -1.159 \\
Cran  & -1.228 &-1.578& -1.838\; -0.9526 &-2.044\; -1.159\\
Erto  &  -1.228 & -1.578&  -1.838\; {\bf -0.9510}  &-2.044\; {\bf -1.156} \\
$a=\log({e}/{2})$  & {\bf -1.236} &{\bf -1.586} &{\bf -1.845}\; {\bf -0.9519}& {\bf -2.050}\; {\bf -1.157}   \\
\colb{Kerman}  & {\bf -1.253} &{\bf -1.609}&{\bf -1.872}\; {\bf -0.9555} & {\bf -2.079}\; {\bf -1.163}\\
\hline
\end{tabular}
 \label{table:method-comparison}
 \end{table}

\section{Comparison}
\label{sect:comparison}
This subsection briefly compares the plotting positions from the Weibull method (WM) and the \colb{Beta Median (BM) method}. We denote their corresponding PIV by PIV$^W=(p_1^W,\cdots,p_n^W)$ and PIV$^B=(p_1^B,\cdots,p_n^B)$.
Figure \ref{QQ_plot_PL_Weibull.pdf} compares  BM to the WM for $n=2,3,4,5$, showing that BM chooses larger values
for $p_i,\;i>n/2$ and smaller values for $p_i,\;i<n/2$ as compared to WM. The difference is most
noticeable at $p_1$ and $p_n$ where quantiles closer to the tails of the distribution are compared to $X_{(1)}$
and $X_{(n)}$. 
\begin{figure}
\centering
\includegraphics[width=0.9\textwidth] {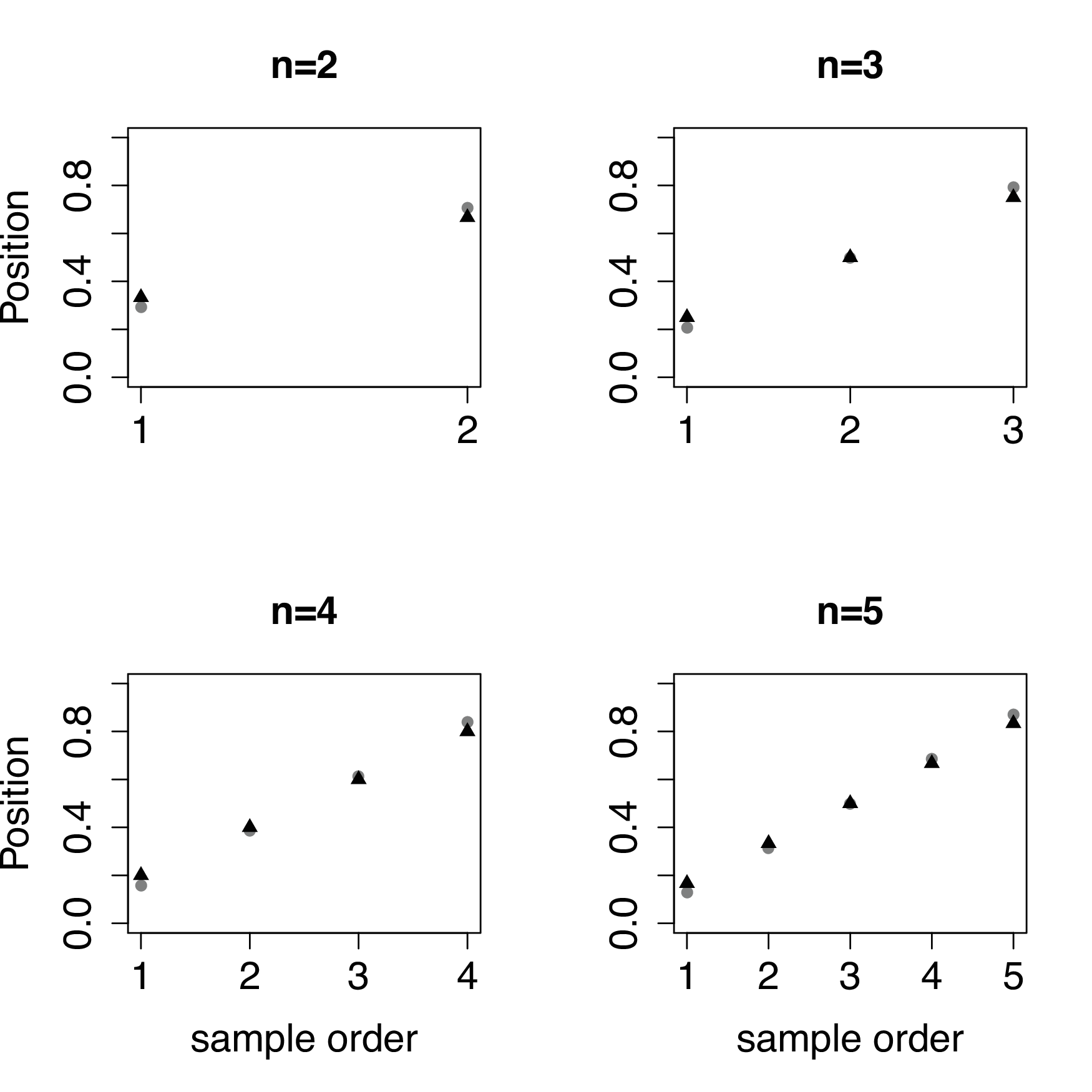}
 \caption{\footnotesize The Weibull quantile positions (black triangle) are compared to Beta Median positions
 (grey circles) for
 $n=2,\cdots,5$.}
 \label{QQ_plot_PL_Weibull.pdf}
\end{figure}
To compare WM and BM for arbitrary $n$, we use a Theorem of \cite{payton-1989} on the difference of the mean and median of 
the Beta distribution, which has not received the attention it deserves in the plotting positions literature. This theorem states that if $X \sim Beta(\alpha,\beta),\;\alpha,\beta>1$, then 
\begin{eqnarray}
0<E\{X\}-q_X(1/2)<|\alpha-\beta|/\{(\alpha+\beta)(\alpha+\beta-2)\},\; & \alpha<\beta&  \nonumber\\
0<q_X(1/2)-E\{X\}<|\alpha-\beta|/\{(\alpha+\beta)(\alpha+\beta-2)\},\; & \alpha>\beta&  \nonumber\\
q_X(1/2)=E\{X\},\; & \alpha=\beta,&
\label{payton-theorem}
\end{eqnarray}
where we have corrected line 2 of the statement (changed $0<E\{X\}-q_X(1/2)$ to $0<q_X(1/2)-E\{X\}$).

In the following, we present a theorem which clarifies the relationship between the WM and BM. We compare the plotting positions in terms of their difference and their ratio. The theorem provides a bound for the difference of $p_i^W$ and $p_i^M,$ which  is smaller for central positions and smaller than $1/(n+1)$ for all positions. Also we find the bound of $1/i$ for the ratio of $p_i^B$ and $p_i^W$. Since $p_i^B$ and $p_i^W$ are larger than 1/2 for $i>n/2$, we compare also the ratio of  $1-p_i^B$ and $1-p_i^W$ and show it is bounded by $1/(n+1-i)$. In summary, for the first half of the PIV, when we move away from $i=1$ toward the center,
the ratio becomes small rapidly. For the second half of PIV, when we move from $i=n$ toward the center, the ratio of $1-p_i^B$ and $1-p_i^W$ becomes small rapidly. However the ratio of $p_i^B$ and $p_i^W$, at the beginning of PIV (in particular for $i=1$) and the ratio of $1-p_i^B$ and $1-p_i^W$ at the end of PIV (in particular for $i=n$), are not close to 1 (even for large $n$). In fact we show that both ratios are asymptotically equal to $\log(2) \approx 0.69.$ The exact statements for these claims are given in the following theorem, in which $[m]$ denotes the largest integer less than or equal to $m$.

\begin{theorem} Assume $n>2$ is a natural number. Also let 
\[PIV^W=(p_1^W,\cdots,p_n^W), \mbox{ and } PIV^B=(p_1^B,\cdots,p_n^B),\] denote the PIV for the Weibull method (WM) and Beta Median (BM) method respectively. Then the following results hold.
\begin{itemize}
\item[(a)] $0<p_i^W-p_i^B<|n+1-2i|/(n^2-1),\;i=1,\cdots,[n/2].$
\item[(b)] $0<p_i^B-p_i^W<|n+1-2i|/(n^2-1),\;i=[n/2]+1,\cdots,n.$
\item[(c)] $p_i^W=p_i^B,\;i=(n+1)/2,$ if $n$ is odd.
\item[(d)] $|p_i^W-p_i^B|<1/(n+1),\;i=1,\cdots,n.$
\item[(e)]  $0<p_1^B<p_1^W<1$ and $0<p_n^W<p_n^B<1.$  
\item[(f)] $p_{i-1}^W<p_i^B<p_{i}^W,\;i=2,\cdots,[n/2]$
\item[(g)] $p_{i-1}^B<p_i^W<p_{i}^B,\;i=[n/2]+1,\cdots,(n-1)$
\item[(h)] $|p_i^B/p_i^W-1| < |n+1-2i|/\{i(n-1)\} < 1/i,\;i=1,\cdots,n.$
\item[(i)] $|(1-p_i^B)/(1-p_i^W)-1| < |n+1-2i|/\{(n+1-i)(n-1)\} < 1/(n-i),\;i=1,\cdots,n.$
\item[(j)] $\underset{n \rightarrow \infty}{\lim} p_1^B/p_1^W=\underset{n \rightarrow \infty}{\lim} (1-p_n^B)/(1-p_n^W)=\log(2).$
\end{itemize}
\end{theorem}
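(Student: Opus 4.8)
The key observation is that $p_i^W=i/(n+1)$ is exactly the mean of $Beta(i,n+1-i)$, while $p_i^B$ is its median, so the whole theorem is a quantitative mean--median comparison for the Beta law. The plan is therefore to drive nearly everything from Payton's bound (\ref{payton-theorem}) under the substitution $\alpha=i$, $\beta=n+1-i$, for which $\alpha+\beta=n+1$, $\alpha+\beta-2=n-1$ and $|\alpha-\beta|=|n+1-2i|$, so that Payton's upper bound becomes precisely $|n+1-2i|/(n^2-1)$. First I would dispatch (a), (b), (c): for $2\le i\le n-1$ both shape parameters exceed $1$, so Payton's theorem applies verbatim, and the three cases are selected by the sign of $2i-n-1$. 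When $i<(n+1)/2$ one has $\alpha<\beta$ and gets (a); when $i>(n+1)/2$ one has $\alpha>\beta$ and gets (b); and at $i=(n+1)/2$ (only for odd $n$, where $\alpha=\beta=(n+1)/2>1$) the symmetric case yields the equality (c).

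The main obstacle is the two endpoints $i=1$ and $i=n$, where one shape parameter equals $1$ and Payton's theorem does not apply. Here I would fall back on the explicit medians from Theorem \ref{theo-QQ-plot}, namely $p_1^B=1-(1/2)^{1/n}$ and $p_n^B=(1/2)^{1/n}$, and check the claimed bounds by hand. At $i=1$ the upper bound in (a) reduces to $(1/2)^{1/n}<1$, while the lower bound $p_1^W>p_1^B$ reduces, after raising to the $n$-th power, to $(1+1/n)^n>2$, which holds for all $n>2$; the case $i=n$ then follows from the symmetry $p_i=1-p_{n+1-i}$ shared by both PIVs.

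With (a)--(c) secured the remaining parts are bookkeeping. For (d) I would note that $|n+1-2i|$ is maximized at the endpoints with value $n-1$, whence $|n+1-2i|/(n^2-1)\le 1/(n+1)$, and the strict difference bounds give the uniform estimate. Part (e) is just the sign information of (a), (b) at $i=1,n$ together with $0<p_1^B$ and $p_n^B<1$. For (f) and (g) I would substitute Payton's upper bound into $p_i^B>p_i^W-|n+1-2i|/(n^2-1)$ and simplify the right-hand side to $(i-1)/(n-1)$, which strictly exceeds $p_{i-1}^W=(i-1)/(n+1)$; combined with $p_i^B<p_i^W$ this sandwiches $p_i^B$ between $p_{i-1}^W$ and $p_i^W$, and (g) follows by the same reflection.

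For the ratio statements (h) and (i) I would divide the difference bound of (a)/(b) by $p_i^W=i/(n+1)$ (respectively by $1-p_i^W=(n+1-i)/(n+1)$); the factor $n+1$ cancels and the first inequalities drop out, while the looser bounds $1/i$ and $1/(n-i)$ follow from $|n+1-2i|\le n-1$, the endpoint cases being handled directly via $0<p_i^B$ and $0<1-p_i^B$ as above. Part (i) can alternatively be deduced from (h) by the reflection sending $i$ to $n+1-i$, under which $1-p_i^W=p_{n+1-i}^W$ and $1-p_i^B=p_{n+1-i}^B$. Finally, for (j) I would use $p_1^B/p_1^W=(n+1)\bigl(1-(1/2)^{1/n}\bigr)$ together with the expansion $(1/2)^{1/n}=1-\tfrac{\log 2}{n}+O(n^{-2})$ (equivalently L'H\^opital's rule, exactly as in (\ref{eqn-limit-p1})) to obtain the limit $\log 2$, and transfer it to $(1-p_n^B)/(1-p_n^W)$ by symmetry.
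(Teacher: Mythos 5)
Your proof is correct, and its backbone is identical to the paper's: everything is driven by Payton's bound (Equation \ref{payton-theorem}) with $\alpha=i$, $\beta=n+1-i$, so that the bound becomes exactly $|n+1-2i|/(n^2-1)$; parts (b), (g), (i) follow from (a), (f), (h) by the Beta/PIV symmetry, (d) from $|n+1-2i|\le n-1$, and (j) from the same L'H\^opital computation as Equation \ref{eqn-limit-p1}. The one genuine difference is the endpoint case $i=1$ (and $i=n$ by symmetry), where Payton's theorem does not apply because one shape parameter equals $1$: the paper argues that the $Beta(1,n)$ density is decreasing, hence its median lies below its mean, whereas you invoke the closed form $p_1^B=1-(1/2)^{1/n}$ from Theorem \ref{theo-QQ-plot} and reduce both required inequalities to the elementary fact $(1+1/n)^n>2$. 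Your route is more explicit and self-contained, and the same closed form then powers part (j) directly; the paper's argument is shorter and would work for any distribution with decreasing density even without a closed-form median. A second, minor divergence: in (f) you substitute Payton's bound itself and simplify $p_i^W-|n+1-2i|/(n^2-1)$ to $(i-1)/(n-1)$, which strictly exceeds $p_{i-1}^W=(i-1)/(n+1)$, whereas the paper uses the cruder uniform bound $1/(n+1)$ together with the observation that consecutive Weibull positions differ by exactly $1/(n+1)$; your version is slightly sharper, and both are valid.
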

\begin{proof}
For the proof, we apply Equation \ref{payton-theorem} (\cite{payton-1989}) to $\alpha=i$ and $\beta=n+1-i$ and use the fact that $p_i^B=Med\{Beta(i,n+1-i)\}$ and $p_i^B=Mean\{Beta(i,n+1-i)\}$.
\begin{itemize}
\item[(a)] Since $i=2,\cdots,[n/2]$, we have $i=\alpha \neq \beta=n-1$. Then we apply Equation \ref{payton-theorem} (line 1).
The case $i=1$ needs a special treatment because $\alpha=1$. In this case $|n+1-2i|/(n^2-1)=1/(n+1)$.
Since $p_1^W=1/(n+1)$ we only need to show $p_1^B<1/(n+1)$ for $n\geq3$. However in this case $\alpha=1,\beta=n$ and the density function of $X \sim Beta(\alpha,\beta)$ is decreasing. This will conclude the median is smaller than the mean by various methods. For example by noting that the median=$\underset{\mu}{\inf}\;E\{|X-\mu|\}$ and mean=$\underset{\mu}{\inf}\;E\{|X-\mu|^2\}$.
\item[(b)] This follows from the above and the Beta distribution symmetry.
\item[(c)] Since $i=(n+1)/2,$  $\alpha=\beta=n+1-i=(n+1)/2$ and and we apply Equation \ref{payton-theorem} (line 3).
\item[(d)] This follows from the above by noting that  $|n+1-2i|/(n^2-1) \leq 1/(n+1),\;i=1,\cdots,n.$
\item[(e)] Straight forward from above.
\item[(f)] This follows from $(p_i^W-p_i^B)<1/(n+1),\;i=1,\cdots,[n/2]$ and noting that $p_i^W=i/(n+1)$, which implies in $PIV^W$ two consecutive elements differ exactly by $1/(n+1)$.
\item[(g)] This follows from the above and the symmetry of Beta distribution.
\item[(h)]  We showed that $p_i^B=p_i^W+R_i$ where $|R_i| < |n+1-2i|/(n^2-1)$. Thus
\[|p_i^B/p_i^W - 1| = |(p_i^W+R_i)/p_i^W - 1| = \frac{|R_i|}{i/(n+1)} <  |n+1-2i|/\{i(n-1)\} < 1/i.\]
\item[(i)] This follows from above and the symmetry of PIV.
\item[(j)] This follows from the limit argument given in Equation \ref{eqn-limit-p1} for $a=0,\;b=1$, in which case $(1+a)/(n+b)=1/(n+1)=p_1^W.$ 
\end{itemize}
\end{proof}

\section{Discussion}

This work investigated the plotting positions problem using various frameworks:\\ distribution-theoretic; decision-theoretic; game-theoretic. While there has been a lot of previous work in this area -- each suggesting a different
formula for the plotting positions -- the validity and the assumptions under which any of these formulas are valid
were \colr{not clear}. This work addresses this issue by deriving the distribution-free plotting
positions under various reasonable objectives which are understandable by practitioners.

Two solutions which came out of the analysis in the above frameworks were the Weibull (expectation of Beta) and the
Beta Median methods. Despite the popularity of the Weibull method (e.g.\;\cite{makkonen-2008}), we showed that
it is not the only correct solution to this problem and the Beta Median method is the optimal under various
scenarios -- for example  to minimize the probability loss function (PL) or to have more chance to win a game in which 
the winner picks the closest quantile from the true distribution to the given order statistics.

In this paper, we also investigated some approximations to the Beta Median method. In particular, we considered the
approximations of the form $(i-a)/(n+b)$ and showed that if this approximation is to be symmetric
($p_i=1-p_{n+1-i}$), it should have the form $p_i=(i-a)/(n+1-2a)$ (which is also a form suggested by \cite{blom-1958} and \cite{Erto-2013}). \colr{To be close to the exact value at
$p_1,p_n$, for large $n$, we must have $a=\log(e/2)$}. In that case, we showed that it is not a very accurate
approximation for example around the 10th percentile, hence concluding no such approximation of the form
$(i-a)/(n+b)$ would be adequately accurate. Moreover, the approximation of \cite{Erto-2013} allowing $a$ to
vary with $n$ which is exact on $p_1,p_n$ suffers from the same issue. By numerical analysis and by inspecting the limits of $p_1,p_n,$ when $n$ becomes large (Equation \ref{eqn-limit-p1}), we showed that another popular approximation, which assumes $a=1/3,$ while performing better in the middle of the probability index
vector, it fails at the small and large indices, e.g.\;$p_1,p_n$. Fortunately our numerical analysis showed that the
algorithm of \cite{cran-1977}, which is also implemented in C and R to calculate the median of the Beta
distribution, works well across the probability index vector. However this is not routinely used in making the
QQ-plots in R or SPSS and instead approximations of the form $(i-a)/(n+b),$ for some $a,b$ are used
(\cite{castillo-2012}). In summary, if the Weibull method is desired, then this is accurate by letting $a=0,b=1$. However if the
Beta Median method is desired, we recommend using the algorithm of \cite{cran-1977} which
is readily available in R.

Finally, we made a comparison between the plotting positions of Weibull method and Beta Median method, in terms of their difference and ratio. In summary the plotting positions of the two method are always close in terms of difference. They are also close in terms of their ratio toward the center of PIV, but they differ at the two ends of PIV.

\vspace{0.5cm}

\noindent{\bf Acknowledgements}: We would like to thank Prof.\;Jim Zidek, Prof.\;Nhu Le and Prof.\;David Scott for
suggestions which have improved this work. The first author was partially supported by research grants from Japanese Society
for Promotion of Science.


\end{document}